\documentclass[11pt,a4paper]{article}
\usepackage[utf8]{inputenc}
\usepackage[T1]{fontenc}
\usepackage[french,english]{babel}
\usepackage{amsmath, amssymb, amsthm, amsfonts}
\usepackage{graphicx}
\usepackage{booktabs}
\usepackage{hyperref}
\usepackage[margin=2.5cm]{geometry}
\usepackage{array}
\usepackage{multirow}

\newtheorem{theorem}{Theorem}[section]

\newtheorem{proposition}[theorem]{Proposition}
\newtheorem{definition}[theorem]{Definition}
\newtheorem{remark}[theorem]{Remark}

\newcommand{\E}{\mathbb{E}}

\newcommand{\hatp}{\hat{\mathbf{p}}_n}
\DeclareMathOperator{\Var}{Var}

\DeclareMathOperator{\Bias}{Bias}

\title{Finite-Sample Bias Correction for Plug-in Estimators of Extropy, Rényi Extropy, and Tsallis Extropy}
\author{Amadou Diadie BA\\
LERSTAD, Gaston Berger University, Saint-Louis, Sénégal\\
ba1amadou@yahoo.fr\ \ \ \ ba.amadou-diadie@ugb.edu.sn}
\date{}

\begin{document}

\maketitle

\begin{abstract}
This paper studies the finite-sample bias of plug-in estimators for Shannon, Rényi, and Tsallis extropies for finitely supported discrete random variables. We derive explicit first-order bias formulas for all three estimators by analyzing the bias of the intermediate functional \(S_\alpha = \sum_{i=1}^r (1-p_i)^\alpha\). Bias-corrected estimators are then proposed. We show that the uncorrected and corrected estimators are asymptotically equivalent, differing only by terms of order \(O(1/n)\). A simulation study validates the theoretical results and demonstrates the superior finite-sample performance of the bias-corrected estimators. The results justify the use of the simpler uncorrected plug-in estimators in asymptotic settings, such as those considered in the companion paper \cite{companion} on almost sure convergence and asymptotic normality.
\end{abstract}

\noindent \textbf{Mathematics Subject Classifications (MSC2020):} 62G05, 62G20, 62B10, 94A17\\

\noindent \textbf{Key Words and Phrases:} Extropy, Rényi extropy, Tsallis extropy, Bias correction, Plug-in estimators, Finite-sample performance, Asymptotic equivalence, Information measures.
\section{Introduction}

\subsection{Motivation}

While entropy has long been the dominant measure of uncertainty in probability and information theory, extropy offers a complementary perspective by quantifying the uncertainty associated with the non-occurrence of events. Extropy is particularly sensitive to distribution tails and rare scenarios, making it a valuable tool in fields such as risk analysis, decision theory, finance, and reliability engineering.

The plug-in estimators of extropy and its generalizations are intuitive and easy to compute. They are also consistent. Despite that, they suffer from bias in finite samples. The presence of bias can significantly affect the accuracy of extropy-type estimates when the sample size \(n\) is moderate. Bias correction is therefore important for improving finite-sample accuracy, especially when the parameter \(\alpha\) departs from 1 or when the support size \(r\) is moderate.

\subsection{Applications}

Extropy has emerged as a fundamental dual measure to entropy with significant implications across disciplines. In risk analysis and finance, extropy highlights rare catastrophic outcomes \cite{shi2025, bruno2020}. In machine learning, it provides tools for uncertainty quantification in classifier outputs \cite{jawal2022, kharazmi2026}. In reliability and survival analysis, cumulative residual extropy offers refined tools for modeling system lifetimes \cite{diCrescenzo2019, asadi2018}. In pattern recognition and health data analysis, Tsallis extropy has been applied as a measure of discrimination \cite{balakrishnan2022tsallis, aldallal2025}. Extropy also complements entropy-based measures in information theory and statistics \cite{hood2015, qiu2018estimators}.

\subsection{Previous Work and Motivation for Bias Correction}

The plug-in estimators defined in \eqref{eq:J_hat}, \eqref{eq:JR_hat}, and \eqref{eq:JT_hat} are intuitive and easy to compute. They are also consistent. Despite that, they suffer from bias in finite samples:
\[
\Bias(\widehat{J}) = \E[\widehat{J}] - J(\mathbf{p}) \neq 0,
\]
and analogously for Rényi and Tsallis extropies estimators.

The presence of bias can significantly affect the accuracy of extropy-type estimates when the sample size \(n\) is moderate. Bias correction is therefore important for improving finite-sample accuracy, especially when the parameter \(\alpha\) departs from 1 or when the support size \(r\) is moderate.

Various methods have been developed in the literature to reduce bias or otherwise improve the estimation of entropy and extropy-type measures. These include:

\begin{enumerate}
    \item \textbf{Kernel-based methods:} \cite{jahanshahi2020nonparametric} proposed kernel-based density estimators for extropy. While these methods can improve performance for continuous distributions, they require careful bandwidth selection and are less straightforward for discrete data.
    
    \item \textbf{Bayesian approaches:} \cite{allabadi2020bayesian} developed a Bayesian approach based on the Dirichlet process for the estimation of extropy. This method is computationally intensive and may be sensitive to prior specification.
    
    \item \textbf{Bootstrap-based methods:} Bootstrap resampling techniques can be used to estimate and correct bias, but they are computationally expensive.
    
    \item \textbf{Analytical bias correction:} This approach, which we adopt in this paper, consists of deriving explicit expressions for the leading bias term using Taylor expansions and then subtracting an estimate of the bias from the plug-in estimator.
\end{enumerate}

We choose the \textbf{analytical bias correction} approach for the following reasons:

\begin{enumerate}
    \item \textbf{Computational efficiency:} No additional simulations are required.
    \item \textbf{Interpretability:} The bias formulas reveal how the bias depends on \(p_i\), \(n\), and \(\alpha\).
    \item \textbf{Explicit asymptotic equivalence:} We prove that the uncorrected and corrected versions differ only by \(O(1/n)\).
    \item \textbf{Simplicity:} No tuning parameters are needed.
\end{enumerate}

In this paper, we derive explicit expressions for the bias of the Shannon, Rényi, and Tsallis extropy estimators at order \(O(1/n)\). These expressions are then used to construct bias-corrected estimators that significantly improve finite-sample accuracy while preserving asymptotic consistency.

\begin{remark}
The analytical bias correction approach is particularly well-suited for this problem because the extropy functionals are smooth functions of the multinomial probabilities. This smoothness allows us to use Taylor expansions to obtain accurate bias approximations.
\end{remark}

\subsection{Main Contribution}

Our main contribution may be summarized as follows. Given an i.i.d. sample \(X_1,\dots,X_n\) from \(X\) with distribution \(\mathbf{p}\), we derive explicit first-order bias formulas and propose bias-corrected estimators for Shannon, \(\alpha\)-Rényi, and \(\alpha\)-Tsallis extropies.

Our method consists of first obtaining the bias of the intermediate functional
\[
S_\alpha(\mathbf{p}) = \sum_{i=1}^r (1-p_i)^\alpha,
\]
using a second-order Taylor expansion. This result is then used to derive the bias of each extropy estimator.

The main results are:
\begin{enumerate}
    \item Explicit bias formulas for Shannon, Rényi, and Tsallis extropy estimators at order \(O(1/n)\);
    \item Bias-corrected estimators that significantly improve finite-sample accuracy;
    \item Proof that the uncorrected and corrected estimators are asymptotically equivalent, differing only by \(O(1/n)\);
    \item A simulation study validating the theoretical results and demonstrating the practical relevance of the bias-corrected estimators.
\end{enumerate}

This work provides a theoretical justification for using the simpler uncorrected plug-in estimators in asymptotic settings, such as those considered in the companion paper \cite{companion}, where almost sure convergence and asymptotic normality are established.

\subsection{Overview of the Paper}

The remainder of the paper is organized as follows. In Section \ref{section2}, we present the notations and preliminary results. Section \ref{section3} derives the bias-corrected estimators for Shannon, Rényi, and Tsallis extropies. Section \ref{section4} presents a comprehensive simulation study. Finally, Section \ref{section5} concludes the paper and discusses future research directions.

\section{Notations and Preliminary Results}
\label{section2}
\subsection{The Discrete Random Variable and Its Estimator}

\begin{definition}
Let \(X\) be a discrete random variable defined on the probability space \((\Omega, \mathcal{A}, \mathbb{P})\) with finite support \(\mathcal{X} = \{c_1, \dots, c_r\}\) (\(r \ge 2\)) and probability mass function \(p_i = \mathbb{P}(X = c_i)\) for \(i \in I = \{1, \dots, r\}\).
\end{definition}

In general, the full probability distribution \(\mathbf{p} = (p_i)_{1 \le i \le r}\) is unknown. Let \(X_1, \dots, X_n\) be i.i.d. according to \(\mathbf{p}\). For \(i \in I\), define the empirical estimator
\begin{equation}
\widehat{p}_n^i = \frac{1}{n}\sum_{j=1}^n \mathbf{1}_{c_i}(X_j), \label{eq:emp_estimator}
\end{equation}
where
$\displaystyle
\mathbf{1}_{c_i}(X_j) = 
\begin{cases}
1 & \text{if } X_j = c_i,\\
0 & \text{otherwise}.
\end{cases}
$
\noindent For fixed \(i\in I\), \(n\widehat{p}_n^i \sim \text{Binomial}(n, p_i)\), hence
\begin{equation}
\E[\widehat{p}_n^i] = p_i, \quad \Var(\widehat{p}_n^i) = \frac{p_i(1-p_i)}{n}. \label{eq:emp_moments}
\end{equation}

\subsection{The Functional \(S_\alpha\) and Its Bias}

In the study of Rényi-Tsallis extropies, a central quantity is the functional
\begin{equation}
S_\alpha(\mathbf{p}) = \sum_{i=1}^r (1-p_i)^\alpha, \qquad \alpha \in (0,1)\cup(1,\infty), \label{eq:S_alpha}
\end{equation}
and its empirical version
\begin{equation}
\widehat{S}_\alpha = \sum_{i=1}^r (1-\widehat{p}_n^i)^\alpha. \label{eq:S_hat}
\end{equation}

The quantity \(S_\alpha\) appears naturally as an intermediate functional in the definition of several generalized extropies, and its behavior governs the corresponding statistical properties of their plug-in estimators.

The following proposition establishes the bias of \(\widehat{S}_\alpha\) at order \(1/n\).

\begin{proposition}[Bias of \(\widehat{S}_\alpha\)]
Let \(\widehat{p}_n\) be the empirical frequency vector from a multinomial sample of size \(n\). Then
\begin{equation}
\E[\widehat{S}_\alpha] = S_\alpha(\mathbf{p}) + \frac{1}{n} B_{S_\alpha}(\mathbf{p},\alpha) + o\left(\frac{1}{n}\right), \label{eq:S_bias}
\end{equation}
with
\begin{equation}
B_{S_\alpha}(\mathbf{p},\alpha) = \frac{\alpha(\alpha-1)}{2}\sum_{i=1}^r p_i(1-p_i)^{\alpha-1}. \label{eq:B_Salpha}
\end{equation}
\end{proposition}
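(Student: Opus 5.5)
The plan is to work coordinate by coordinate. Since $\widehat{S}_\alpha$ is a sum, linearity of expectation gives $\E[\widehat{S}_\alpha]-S_\alpha(\mathbf{p})=\sum_{i=1}^r\bigl(\E[f(\widehat{p}_n^i)]-f(p_i)\bigr)$, where $f(x)=(1-x)^\alpha$. So it suffices to show, for each fixed $i$, that
\[
\E[f(\widehat{p}_n^i)] = f(p_i)+\frac{f''(p_i)}{2}\,\frac{p_i(1-p_i)}{n}+o\!\left(\frac1n\right).
\]
Here $f''(x)=\alpha(\alpha-1)(1-x)^{\alpha-2}$. Substituting gives the $i$-th term $\frac{\alpha(\alpha-1)}{2}p_i(1-p_i)^{\alpha-1}$, and summing over $i$ yields \eqref{eq:B_Salpha}.

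For an index with $p_i\in(0,1)$, I will Taylor expand $f$ around $p_i$ to second order with a remainder:
\[
f(\widehat{p}_n^i)=f(p_i)+f'(p_i)(\widehat{p}_n^i-p_i)+\tfrac12 f''(p_i)(\widehat{p}_n^i-p_i)^2+R_n^i .
\]
Taking expectations, the linear term vanishes by \eqref{eq:emp_moments}, and the quadratic term gives exactly $f''(p_i)p_i(1-p_i)/(2n)$. The degenerate cases are trivial: if $p_i=0$ then $\widehat{p}_n^i=0$ a.s., and if $p_i=1$ then $\widehat{p}_n^i=1$ a.s. In both cases $f(\widehat{p}_n^i)=f(p_i)$, and the formula term $p_i(1-p_i)^{\alpha-1}$ is also $0$, with the convention $0^{\alpha-1}\cdot 0=0$ when $p_i=1$.

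The main obstacle is showing $\E[R_n^i]=o(1/n)$. The function $f$ is smooth on $[0,1)$ but, for non-integer $\alpha<2$, its derivatives blow up at $x=1$. So a uniform third-derivative bound on all of $[0,1]$ is not available. I will split on the event $A_n=\{|\widehat{p}_n^i-p_i|\le \delta\}$, with $\delta=(1-p_i)/2$.

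On $A_n$ the argument stays in a compact subset of $[0,1)$. There $|f'''|\le C$, so $|R_n^i|\le C|\widehat{p}_n^i-p_i|^3$. Binomial central moments give $\E|\widehat{p}_n^i-p_i|^3\le(\E(\widehat{p}_n^i-p_i)^4)^{3/4}=O(n^{-3/2})$.

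On $A_n^c$, $f$ is bounded on $[0,1]$ by $\max(1,\cdot)$; in fact $0\le f\le 1$. The first- and second-order terms are bounded by constants as well. So $|R_n^i|\mathbf 1_{A_n^c}\le C'$. Hoeffding's inequality gives $\mathbb P(A_n^c)\le 2e^{-2n\delta^2}=o(1/n)$.

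Combining the two pieces gives $\E[R_n^i]=O(n^{-3/2})=o(1/n)$. Summing over the finitely many $i$ completes the argument.
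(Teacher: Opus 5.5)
Your proposal is correct and follows the same route as the paper: a coordinatewise second-order Taylor expansion of $(1-x)^\alpha$ at $p_i$. The linear term vanishes because $\widehat{p}_n^i$ is unbiased, and the quadratic term gives $\frac{\alpha(\alpha-1)}{2}(1-p_i)^{\alpha-2}\,p_i(1-p_i)/n$. The paper only writes an $o_p(\|\Delta\|^2)$ remainder and asserts that its expectation is $o(1/n)$, which convergence in probability alone does not give. Your split on $A_n$, with the cubic-moment bound on $A_n$ and Hoeffding on $A_n^c$, actually justifies that step and gives the sharper $O(n^{-3/2})$ error. This matters here because the derivatives of $(1-x)^\alpha$ blow up at $x=1$.
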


\begin{proof}
We keep the notations:
\[
\widehat{S}_\alpha = \sum_{i=1}^r \phi(\widehat{p}_n^i), \qquad \phi(t) = (1-t)^\alpha, \ \ t<1 \qquad S_\alpha = \sum_{i=1}^r \phi(p_i).
\]

By Taylor expansion of each \(\phi(\widehat{p}_n^i)\) at \(p_i\) up to second order:
\[
S_\alpha(\hatp) = S_\alpha(\mathbf{p}) + \sum_{i=1}^r \phi'(p_i)\Delta_n^i + \frac{1}{2}\sum_{i=1}^r \phi''(p_i)(\Delta_n^i)^2 + o_p(\|\Delta\|^2),
\]
where \(\Delta_n^i = \widehat{p}_n^i - p_i\), \(\phi'(t) = -\alpha(1-t)^{\alpha-1}\), and \(\phi''(t) = \alpha(\alpha-1)(1-t)^{\alpha-2}\).

Since \(\E[\Delta_n^i] = 0\) and \(\Var(\widehat{p}_i) = p_i(1-p_i)/n\), we obtain:
\[
\E[S_\alpha(\hatp)] - S_\alpha(\mathbf{p}) = \frac{\alpha(\alpha-1)}{2n}\sum_{i=1}^r p_i(1-p_i)^{\alpha-1} + o\left(\frac{1}{n}\right).
\]

Thus,
\[
\E[\widehat{S}_\alpha] = S_\alpha(\mathbf{p}) + \frac{B_{S_\alpha}(\mathbf{p},\alpha)}{n} + o\left(\frac{1}{n}\right).
\]

This proves \eqref{eq:S_bias} and \eqref{eq:B_Salpha}.
\end{proof}

\begin{remark}
The bias of \(\widehat{S}_\alpha\) is of order \(O(1/n)\) and vanishes as \(n \to \infty\). The sign of the bias depends on \(\alpha\): if \(\,0<\alpha < 1\), the bias is negative; if \(\alpha > 1\), the bias is positive.
\end{remark}

\section{Extropies and Bias-Corrected Estimators}
\label{section3}
\subsection{Definitions of Extropies}

\begin{definition}[Shannon Extropy]
The Shannon extropy of the random variable \(X\) is given by (see \cite{lad2015extropy})
\begin{equation}
J(\mathbf{p}) = -\sum_{i=1}^r (1-p_i)\log(1-p_i). \label{eq:shannon}
\end{equation}
For ease of computations, we use the natural logarithm.
\end{definition}

Two important one-parameter families of extropy measures have been proposed:

\begin{definition}[Rényi Extropy]
The Rényi extropy of the random variable \(X\) is defined by (see \cite{liu2023renyi})
\begin{equation}
J_{R,\alpha}(\mathbf{p}) = \frac{1}{1-\alpha}\log\left(\sum_{i=1}^r (1-p_i)^\alpha\right), \quad \alpha \in (0,1)\cup(1,\infty). \label{eq:renyi}
\end{equation}
\end{definition}

\begin{definition}[Tsallis Extropy]
The Tsallis extropy of the random variable \(X\) is defined by (see \cite{balakrishnan2022tsallis})
\begin{equation}
J_{T,\alpha}(\mathbf{p}) = \frac{1}{1-\alpha}\left(\sum_{i=1}^r (1-p_i)^\alpha - 1\right), \quad \alpha \in (0,1)\cup(1,\infty). \label{eq:tsallis}
\end{equation}
\end{definition}

As \(\alpha \to 1\), both measures converge to the Shannon extropy \(J(\mathbf{p})\). The parameter \(\alpha\) regulates sensitivity to small or large probabilities: values \(\alpha < 1\) give more weight to rare events, while \(\alpha > 1\) emphasize dominant events.

\subsection{Plug-in Estimators}

The corresponding plug-in estimators are obtained by replacing \(\mathbf{p}\) by \(\hatp\):
\begin{equation}
\widehat{J} = -\sum_{i=1}^r (1-\widehat{p}_n^i)\log(1-\widehat{p}_n^i), \label{eq:J_hat}
\end{equation}
\begin{equation}
\widehat{J}_{R,\alpha} = \frac{1}{1-\alpha}\log\left(\sum_{i=1}^r (1-\widehat{p}_n^i)^\alpha\right), \label{eq:JR_hat}
\end{equation}
\begin{equation}
\widehat{J}_{T,\alpha} = \frac{1}{1-\alpha}\left(\sum_{i=1}^r (1-\widehat{p}_n^i)^\alpha - 1\right). \label{eq:JT_hat}
\end{equation}

These plug-in estimators are intuitive and easy to compute. They are also consistent. Despite that, they suffer from bias in finite samples:
\[
\Bias(\widehat{J}) = \E[\widehat{J}] - J(\mathbf{p}) \neq 0,
\]
and analogously for Rényi and Tsallis extropies estimators.

\subsection{Bias Correction}

The following proposition gives the bias-corrected estimators of the plug-in extropies.

\begin{proposition}[Bias-corrected extropy estimators]
Let \(\widehat{J}\), \(\widehat{J}_{R,\alpha}\), and \(\widehat{J}_{T,\alpha}\) be the plug-in estimators defined in \eqref{eq:J_hat}-\eqref{eq:JT_hat}. The bias-corrected estimators are:
\begin{equation}
\widehat{J}^{(\text{corr})} = \widehat{J} + \frac{1}{2n}, \label{eq:J_corr}
\end{equation}
\begin{equation}
\widehat{J}_{T,\alpha}^{(\text{corr})} = \widehat{J}_{T,\alpha} + \frac{\alpha}{2n}\sum_{i=1}^r \widehat{p}_n^i (1-\widehat{p}_n^i)^{\alpha-1}, \label{eq:JT_corr}
\end{equation}
\begin{equation}
\widehat{J}_{R,\alpha}^{(\text{corr})} = \widehat{J}_{R,\alpha} - \frac{1}{2n(1-\alpha)}\left(\frac{\alpha(\alpha-1)\widehat{T}_1}{\widehat{S}_\alpha} - \frac{\alpha^2(\widehat{T}_2 - \widehat{T}_1^2)}{\widehat{S}_\alpha^2}\right), \label{eq:JR_corr}
\end{equation}
where
\begin{equation}
\widehat{T}_1 = \sum_{i=1}^r \widehat{p}_n^i (1-\widehat{p}_n^i)^{\alpha-1}, \quad
\widehat{T}_2 = \sum_{i=1}^r \widehat{p}_n^i (1-\widehat{p}_n^i)^{2\alpha-2}, \quad
\widehat{S}_\alpha = \sum_{i=1}^r (1-\widehat{p}_n^i)^\alpha. \label{eq:T_def}
\end{equation}
\end{proposition}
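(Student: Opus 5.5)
The plan is to derive, for each of the three plug-in estimators, an expansion
\[
\E[\widehat{J}_\bullet]=J_\bullet(\mathbf{p})+\frac{1}{n}B_\bullet(\mathbf{p})+o\!\left(\frac1n\right).
\]
Then $\widehat{J}^{(\text{corr})}_\bullet=\widehat{J}_\bullet-\frac1n B_\bullet(\hatp)$, and it remains to check two things: this is exactly the displayed formula, and the plug-in replacement of $B_\bullet(\mathbf{p})$ by $B_\bullet(\hatp)$ only costs $o(1/n)$ in expectation. Throughout I would use the technique of the proof of Proposition on the bias of $\widehat{S}_\alpha$: a second-order Taylor expansion around $\mathbf{p}$ with $\Delta_n^i=\widehat{p}_n^i-p_i$, together with the multinomial moments $\E[\Delta_n^i]=0$ and $\E[\Delta_n^i\Delta_n^j]=(p_i\delta_{ij}-p_ip_j)/n$.

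\textbf{Shannon and Tsallis.} Take $\psi(t)=-(1-t)\log(1-t)$. Then $\psi'(t)=\log(1-t)+1$ and $\psi''(t)=-1/(1-t)$, so the second-order term gives
\[
\frac{1}{2n}\sum_i p_i(1-p_i)\cdot\frac{-1}{1-p_i}=-\frac{1}{2n}\sum_i p_i=-\frac{1}{2n}.
\]
The bias coefficient is therefore the constant $-1/2$, and adding $1/(2n)$ gives \eqref{eq:J_corr} with no plug-in error at all. For Tsallis, $\widehat{J}_{T,\alpha}$ is affine in $\widehat{S}_\alpha$. The Proposition on $\widehat{S}_\alpha$ immediately gives bias $B_{S_\alpha}/(n(1-\alpha))=-\frac{\alpha}{2n}\sum_i p_i(1-p_i)^{\alpha-1}$. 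Replacing $p_i$ by $\widehat{p}_n^i$ and subtracting this yields \eqref{eq:JT_corr}.

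\textbf{Rényi.} Write $\widehat{J}_{R,\alpha}=\frac{1}{1-\alpha}\log \widehat{S}_\alpha$ and expand $\log$ at $S_\alpha$:
\[
\log\widehat{S}_\alpha=\log S_\alpha+\frac{\widehat{S}_\alpha-S_\alpha}{S_\alpha}-\frac{(\widehat{S}_\alpha-S_\alpha)^2}{2S_\alpha^2}+\text{remainder}.
\]
The first-order term has expectation $B_{S_\alpha}/(nS_\alpha)=\frac{\alpha(\alpha-1)T_1}{2nS_\alpha}$. For the quadratic term only the linear part $\widehat{S}_\alpha-S_\alpha\approx-\alpha\sum_i(1-p_i)^{\alpha-1}\Delta_n^i$ matters at order $1/n$. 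Its second moment, computed from the multinomial covariance, is $\frac{\alpha^2}{n}(T_2-T_1^2)$, with $T_1,T_2$ the population analogues of \eqref{eq:T_def}. Hence the bias is
\[
\frac{1}{2n(1-\alpha)}\left(\frac{\alpha(\alpha-1)T_1}{S_\alpha}-\frac{\alpha^2(T_2-T_1^2)}{S_\alpha^2}\right)+o(1/n),
\]
and subtracting its plug-in version is exactly \eqref{eq:JR_corr}.

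\textbf{Main obstacle.} The delicate step is rigor for the remainders. The $o_p(\|\Delta\|^2)$ Taylor remainders must be shown to be $o(1/n)$ \emph{in expectation}, and similarly for $\frac1n(B_\bullet(\hatp)-B_\bullet(\mathbf{p}))$. I would split on the event $A_n=\{\max_i|\Delta_n^i|\le\delta\}$ with $\delta$ small enough that all $\widehat{p}_n^i$ stay in a compact set away from $1$ (assuming $p_i<1$ for all $i$, which holds when at least two $p_i$ are positive; otherwise every quantity is degenerate). On $A_n$, the functions $\phi$, $\psi$, $\log$, and the maps $\mathbf{q}\mapsto T_1,T_2,S_\alpha$ are $C^3$ with bounded derivatives and $S_\alpha$ is bounded away from $0$. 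The third-order remainder is then bounded by $C\sum_i|\Delta_n^i|^3$, whose expectation is $O(n^{-3/2})$. The plug-in difference is $O(\|\Delta\|)$ with expectation $O(n^{-1/2})$, so after the factor $1/n$ it is $O(n^{-3/2})$.

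On $A_n^c$, Hoeffding's inequality gives $\mathbb{P}(A_n^c)\le 2r e^{-2n\delta^2}$, which is exponentially small. The estimators are bounded on the simplex except where some $\widehat{p}_n^i=1$ and $\alpha<1$, where $(1-\widehat{p}_n^i)^{\alpha-1}$ is undefined. I would adopt a convention there (e.g.\ set the correction to $0$ on that event, or note the event has probability $\prod_{j\ne i}(1-p_j)^n$, which is exponentially small). Either way, the contribution of $A_n^c$ is $o(1/n)$, which completes the argument.
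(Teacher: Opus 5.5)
Your proposal is correct and follows the paper's proof essentially step for step (direct Taylor expansion of $\psi$ giving $-1/(2n)$, the affine reduction of Tsallis to the bias of $\widehat{S}_\alpha$, and the second-order expansion of $\log$ at $S_\alpha$ with $\E[Y_n^2]=\frac{\alpha^2}{n}(T_2-T_1^2)+o(1/n)$ for R\'enyi). Your localization on $A_n$ plus Hoeffding adds the control of remainders and of the plug-in error $\frac1n(B_\bullet(\hatp)-B_\bullet(\mathbf{p}))$ that the paper omits, with two small slips that do not affect the conclusion: $\mathbb{P}(\widehat{p}_n^i=1)=p_i^n$ (your product $\prod_{j\ne i}(1-p_j)^n$ is only an upper bound), and for $\alpha<1$ the quantities $\widehat{T}_1,\widehat{T}_2$ are only polynomially bounded near that vertex (e.g.\ $\widehat{T}_2\le r\,n^{2-2\alpha}$), which the exponentially small $\mathbb{P}(A_n^c)$ still absorbs once a finite convention is fixed on $\{\widehat{p}_n^i=1\}$.
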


\begin{proof}
We use the bias expansion of \(\widehat{S}_\alpha\) established in Proposition 2.2.

\subsection*{Bias of the Tsallis extropy estimator}

For the Tsallis extropy:
\[
J_{T,\alpha}(\mathbf{p}) = \frac{1}{1-\alpha}(S_\alpha(\mathbf{p}) - 1).
\]

The bias is:
\[
\Bias(\widehat{J}_{T,\alpha}) = \frac{1}{1-\alpha}\left(\E[\widehat{S}_\alpha] - S_\alpha(\mathbf{p})\right).
\]

Using \eqref{eq:S_bias}:
\[
\Bias(\widehat{J}_{T,\alpha}) = \frac{1}{1-\alpha} \cdot \frac{B_{S_\alpha}(\mathbf{p},\alpha)}{n} + o\left(\frac{1}{n}\right)
= -\frac{\alpha}{2n}\sum_{i=1}^r p_i(1-p_i)^{\alpha-1} + o\left(\frac{1}{n}\right). \label{eq:bias_T}
\]

\begin{remark}
Since \(\alpha > 0\), \(p_i \in (0,1)\), and \(n > 0\), we have \(\frac{\alpha}{2n} > 0\) and \(\sum_{i=1}^r p_i(1-p_i)^{\alpha-1} > 0\). Therefore, \(\Bias(\widehat{J}_{T,\alpha}) < 0\) for all \(\alpha > 0\), \(\alpha \neq 1\). The Tsallis extropy estimator systematically \emph{underestimates} the true value in finite samples.
\end{remark}

A first-order bias-corrected Tsallis estimator is therefore:
\[
\widehat{J}_{T,\alpha}^{(\text{corr})} = \widehat{J}_{T,\alpha} - \widehat{\Bias}(\widehat{J}_{T,\alpha})
= \widehat{J}_{T,\alpha} + \frac{\alpha}{2n}\sum_{i=1}^r \widehat{p}_i(1-\widehat{p}_i)^{\alpha-1}.
\]

\subsection*{Bias of the Rényi extropy estimator}

For the Rényi extropy:
\[
J_{R,\alpha}(\mathbf{p}) = \frac{1}{1-\alpha}\log(S_\alpha(\mathbf{p})).
\]

Let \(Y_n = \widehat{S}_\alpha - S_\alpha\). Then:
\[
\log \widehat{S}_\alpha = \log S_\alpha + \frac{Y_n}{S_\alpha} - \frac{Y_n^2}{2S_\alpha^2} + o_p(Y_n^2).
\]

Taking expectations:
\[
\E[\log \widehat{S}_\alpha] - \log S_\alpha = \frac{\E[Y_n]}{S_\alpha} - \frac{\E[Y_n^2]}{2S_\alpha^2} + o\left(\frac{1}{n}\right).
\]

Now:
\[
\E[Y_n] = \frac{B_{S_\alpha}}{n} + o\left(\frac{1}{n}\right) = \frac{\alpha(\alpha-1)}{2n}T_1 + o\left(\frac{1}{n}\right),
\]
where \(\displaystyle T_1 = \sum_{i=1}^r p_i(1-p_i)^{\alpha-1}\).

And:
\[
\E[Y_n^2] = \Var(\widehat{S}_\alpha) + (\E[Y_n])^2 = \frac{\alpha^2}{n}(T_2 - T_1^2) + o\left(\frac{1}{n}\right),
\]
where \(\displaystyle T_2 = \sum_{i=1}^r p_i(1-p_i)^{2\alpha-2}\).

Hence:
\[
\E[\log \widehat{S}_\alpha] - \log S_\alpha = \frac{\alpha(\alpha-1)T_1}{2nS_\alpha} - \frac{\alpha^2(T_2 - T_1^2)}{2nS_\alpha^2} + o\left(\frac{1}{n}\right).
\]

Therefore:
\[
\Bias(\widehat{J}_{R,\alpha}) = \frac{1}{1-\alpha}\left(\E[\log \widehat{S}_\alpha] - \log S_\alpha\right)
= \frac{1}{2n(1-\alpha)}\left(\frac{\alpha(\alpha-1)T_1}{S_\alpha} - \frac{\alpha^2(T_2 - T_1^2)}{S_\alpha^2}\right) + o\left(\frac{1}{n}\right). \label{eq:bias_R}
\]

A first-order bias-corrected Rényi estimator is therefore:
\[
\widehat{J}_{R,\alpha}^{(\text{corr})} = \widehat{J}_{R,\alpha} - \frac{1}{2n(1-\alpha)}\left(\frac{\alpha(\alpha-1)\widehat{T}_1}{\widehat{S}_\alpha} - \frac{\alpha^2(\widehat{T}_2 - \widehat{T}_1^2)}{\widehat{S}_\alpha^2}\right).
\]

\subsection*{Bias of the Shannon extropy estimator}

The Shannon extropy is obtained as the limit \(\alpha \to 1\):
\[
J(\mathbf{p}) = -\sum_{i=1}^r (1-p_i)\log(1-p_i).
\]

We derive the bias directly by a Taylor expansion of \(\psi(t) = -(1-t)\log(1-t)\). We have:
\[
\psi'(t) = 1 + \log(1-t), \quad \psi''(t) = -\frac{1}{1-t}.
\]

A Taylor expansion gives:
\[
\E[\psi(\widehat{p}_n^i)] = \psi(p_i) + \frac{1}{2}\psi''(p_i)\frac{p_i(1-p_i)}{n} + o\left(\frac{1}{n}\right)
= \psi(p_i) - \frac{p_i}{2n} + o\left(\frac{1}{n}\right).
\]

Summing over \(i\):
\[
\E[\widehat{J}] - J(\mathbf{p}) = -\frac{1}{2n} + o\left(\frac{1}{n}\right). \label{eq:bias_J}
\]

\begin{remark}
The Shannon extropy estimator also systematically \emph{underestimates} the true value in finite samples, with a bias of \(-1/(2n)\). Interestingly, the bias does not depend on the individual probabilities \(p_i\), only on the sample size \(n\).
\end{remark}

A first-order bias-corrected Shannon estimator is therefore:
\[
\widehat{J}^{(\text{corr})} = \widehat{J} + \frac{1}{2n}.
\]

This completes the proof of Proposition 3.4.
\end{proof}

\begin{remark}
As noted in the proof, the bias-corrected estimators differ from the uncorrected ones by terms of order \(O(1/n)\). Consequently, the asymptotic results (consistency and asymptotic normality) apply equally to both versions. The bias correction only improves finite-sample performance.
\end{remark}

\subsection{Summary of Bias Corrections}

\begin{table}[h!]
\centering
\caption{Bias expressions and corrected estimators for discrete extropy measures}
\begin{tabular}{|l|c|c|}
\hline
\textbf{Extropy type} & \textbf{Bias term} & \textbf{Bias-corrected estimator} \\
\hline
Shannon & \(-\dfrac{1}{2n}\) & \(\widehat{J} + \dfrac{1}{2n}\) \\
\hline
Tsallis & \(-\dfrac{\alpha}{2n}\sum p_i(1-p_i)^{\alpha-1}\) & \(\widehat{J}_{T,\alpha} + \dfrac{\alpha}{2n}\sum \widehat{p}_i(1-\widehat{p}_i)^{\alpha-1}\) \\
\hline
Rényi & \(\dfrac{1}{2n(1-\alpha)}\left(\dfrac{\alpha(\alpha-1)T_1}{S_\alpha} - \dfrac{\alpha^2(T_2-T_1^2)}{S_\alpha^2}\right)\) & \(\widehat{J}_{R,\alpha} - \dfrac{1}{2n(1-\alpha)}\left(\dfrac{\alpha(\alpha-1)\widehat{T}_1}{\widehat{S}_\alpha} - \dfrac{\alpha^2(\widehat{T}_2-\widehat{T}_1^2)}{\widehat{S}_\alpha^2}\right)\) \\
\hline
\end{tabular}
\label{tab:bias}
\end{table}

\section{Simulation Study}
\label{section4}
\subsection{Motivation}

In this section, we conduct a comprehensive simulation study to:
\begin{enumerate}
    \item validate the asymptotic theory established in Section 3, namely the almost sure convergence and asymptotic normality of the plug-in estimators;
    \item illustrate the practical behavior of the estimators for finite sample sizes;
    \item demonstrate the relevance of extropy-based measures in a forecasting context.
\end{enumerate}

\subsection{Experimental Setup}

We generate \(M = 1000\) independent samples of sizes 
\(n = 10, 20, 50, 100, 200, 500, 1000, 2000\) from a multinomial distribution 
with \(r = 5\) categories and probability vector 
\(\mathbf{p} = (0.40, 0.30, 0.15, 0.10, 0.05)\). 
This distribution represents a realistic scenario with one dominant category (40\%) and a tail of decreasing probabilities, mimicking many real-world forecasting situations.

For each sample, we compute the following plug-in estimators:
\begin{itemize}
    \item The Shannon extropy: \(\widehat{J} = -\sum_{i=1}^{r} (1-\widehat{p}_i)\log(1-\widehat{p}_i)\) (see \eqref{eq:shannon});
    \item The Rényi extropy for \(\alpha = 0.5\) and \(\alpha = 3\):
    \[
    \widehat{J}_{R,\alpha} = \frac{1}{1-\alpha}\log\left(\sum_{i=1}^{r} (1-\widehat{p}_i)^\alpha\right) \ \ \text{ (see \eqref{eq:renyi});}
    \]
   
    \item The Tsallis extropy for \(\alpha = 0.5\) and \(\alpha = 3\):
    \[
    \widehat{J}_{T,\alpha} = \frac{1}{1-\alpha}\left(\sum_{i=1}^{r} (1-\widehat{p}_i)^\alpha - 1\right)\ \ \text{(see \eqref{eq:tsallis})}.
    \]
    
\end{itemize}

The true values \(J\), \(J_{R,\alpha}\), and \(J_{T,\alpha}\) are computed using the true probability vector \(\mathbf{p}\).

We also compute the bias-corrected versions \(\widehat{J}^{(\text{corr})}\), \(\widehat{J}_{R,\alpha}^{(\text{corr})}\), and \(\widehat{J}_{T,\alpha}^{(\text{corr})}\) as defined in Proposition 3.4.

\subsection{Results}

To assess the finite-sample behavior of the estimators and to verify the asymptotic equivalence between the uncorrected and corrected versions, we evaluate the following quantities:
\begin{enumerate}
    \item The empirical bias: \(\widehat{\text{Bias}} = \frac{1}{M}\sum_{m=1}^M (\widehat{\theta}_m - \theta)\);
    \item The empirical standard deviation: \(\widehat{\text{SD}} = \sqrt{\frac{1}{M-1}\sum_{m=1}^M (\widehat{\theta}_m - \overline{\theta})^2}\);
    \item The root mean squared error (RMSE): \(\text{RMSE} = \sqrt{\widehat{\text{Bias}}^2 + \widehat{\text{SD}}^2}\);
    \item The product \(\text{RMSE} \times \sqrt{n}\), which confirms the \(1/\sqrt{n}\) rate of convergence.
\end{enumerate}

Figures 1--5 present these diagnostics for Shannon, Rényi (\(\alpha = 0.5\) and \(\alpha = 3\)), and Tsallis (\(\alpha = 0.5\) and \(\alpha = 3\)) extropies. Each figure displays four panels arranged in a \(2 \times 2\) layout: bias, standard deviation, RMSE, and RMSE \(\times \sqrt{n}\).

\subsection{Interpretation}

\subsubsection{Shannon Extropy}

Figure \ref{fig:shannon} presents the convergence diagnostics for the Shannon extropy estimator.

Several key observations can be made:
\begin{itemize}
    \item \textbf{Bias:} The uncorrected estimator exhibits a negative bias for small \(n\), which vanishes at rate \(1/n\). The bias-corrected estimator shows negligible bias for all sample sizes, confirming the effectiveness of the correction.
    \item \textbf{Standard deviation:} Both versions have identical standard deviations, which decay at rate \(1/\sqrt{n}\).
    \item \textbf{RMSE:} The RMSE of the corrected estimator is smaller for moderate \(n\), but both converge to the same limit as \(n \to \infty\).
    \item \textbf{RMSE \(\times \sqrt{n}\):} The product stabilizes around a constant for both versions, confirming the \(1/\sqrt{n}\) rate of convergence.
\end{itemize}

\begin{figure}[h!]
\centering
\includegraphics[width=0.9\textwidth]{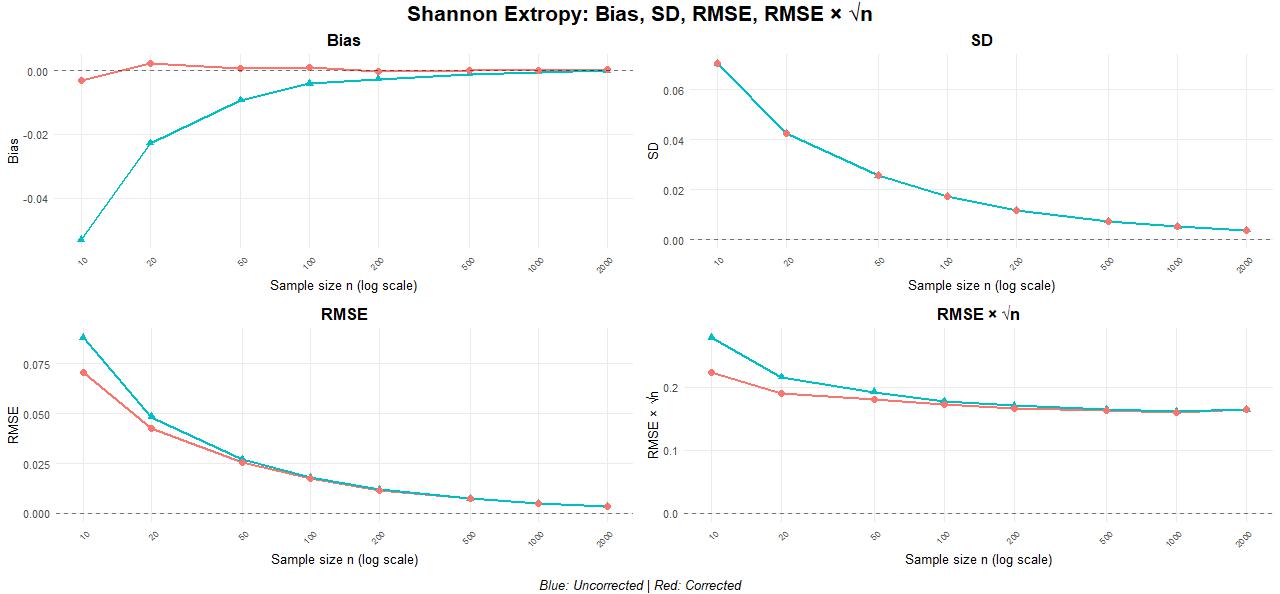}
\caption{Shannon extropy estimator: Bias, standard deviation, RMSE, and RMSE \(\times \sqrt{n}\) as functions of sample size \(n\). Blue: uncorrected estimator; Red: bias-corrected estimator. The bias-corrected estimator shows negligible bias for all \(n\), while the uncorrected estimator's bias vanishes as \(n \to \infty\). Both versions exhibit identical standard deviation and RMSE for large \(n\), confirming their asymptotic equivalence.}
\label{fig:shannon}
\end{figure}

\subsubsection{Rényi Extropy}

Figures 2 and 3 present the convergence diagnostics for the Rényi extropy estimator with \(\alpha = 0.5\) and \(\alpha = 3\), respectively.

For \(\alpha = 0.5\) (Figure 2), which gives more weight to rare events, the uncorrected estimator exhibits larger finite-sample bias. The bias-corrected estimator performs significantly better for moderate sample sizes. For \(\alpha = 3\) (Figure 3), which emphasizes dominant events, the bias is smaller, but the same conclusions hold: the corrected and uncorrected estimators are asymptotically equivalent.

\begin{figure}[h!]
\centering
\includegraphics[width=1\textwidth]{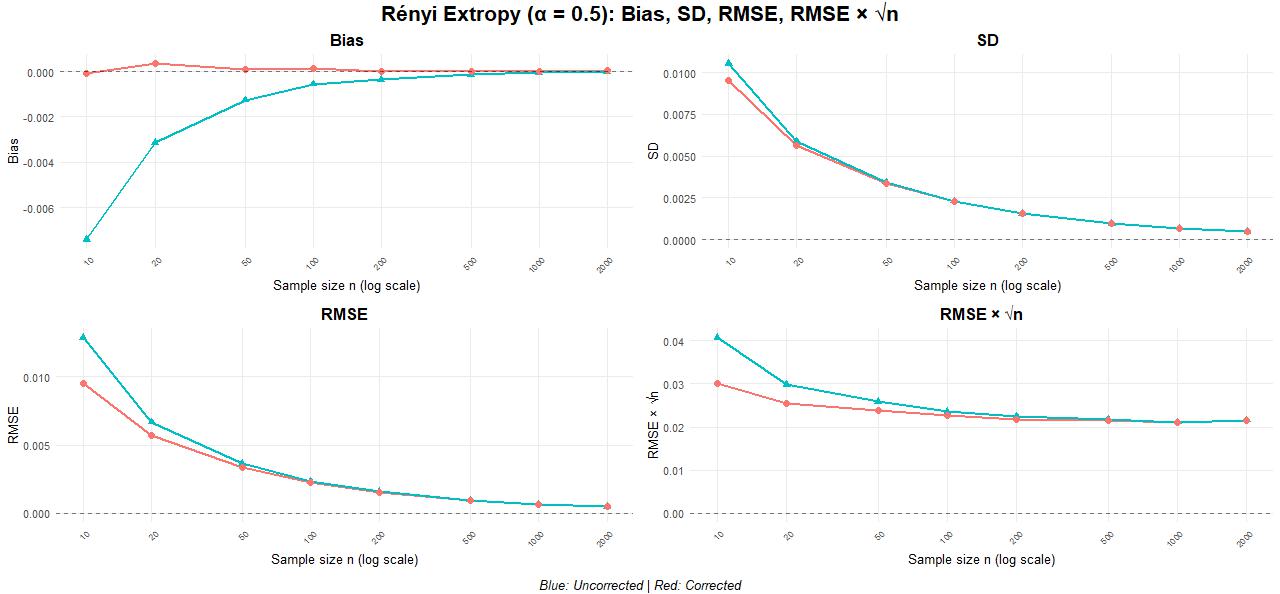}
\caption{Rényi extropy estimator with \(\alpha = 0.5\): Bias, standard deviation, RMSE, and RMSE \(\times \sqrt{n}\). Blue: uncorrected; Red: bias-corrected. The bias-corrected estimator significantly reduces the finite-sample bias, while both versions converge to the same limit as \(n \to \infty\).}
\label{fig:renyi_alpha05}
\end{figure}

\begin{figure}[h!]
\centering
\includegraphics[width=1\textwidth]{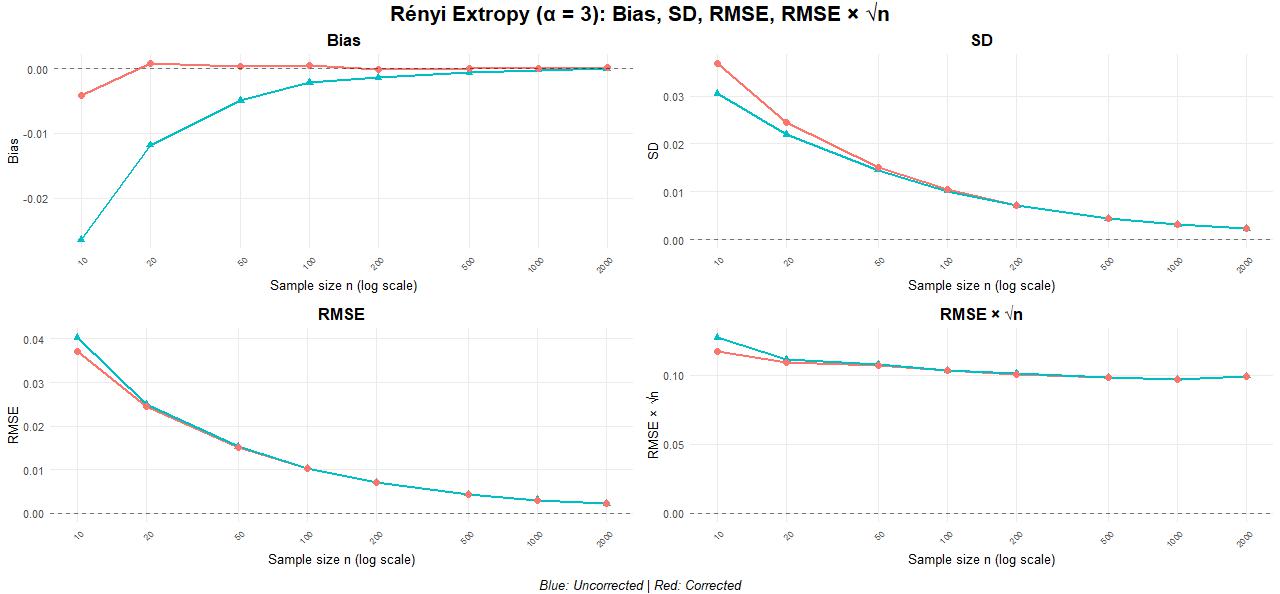}
\caption{Rényi extropy estimator with \(\alpha = 3\): Bias, standard deviation, RMSE, and RMSE \(\times \sqrt{n}\). Blue: uncorrected; Red: bias-corrected. For \(\alpha = 3\), which emphasizes dominant events, the bias is smaller than for \(\alpha = 0.5\), but the asymptotic equivalence between the two versions is still confirmed.}
\label{fig:renyi_alpha3}
\end{figure}

\subsubsection{Tsallis Extropy}

Figures 4 and 5 present the convergence diagnostics for the Tsallis extropy estimator with \(\alpha = 0.5\) and \(\alpha = 3\), respectively.

For Tsallis extropy, the results are qualitatively similar to those of Rényi extropy. The bias-corrected estimator significantly improves finite-sample accuracy, especially for \(\alpha = 0.5\) where rare events receive more weight. In all cases, the uncorrected and corrected estimators converge to the same limit as \(n \to \infty\).

\begin{figure}[h!]
\centering
\includegraphics[width=1\textwidth]{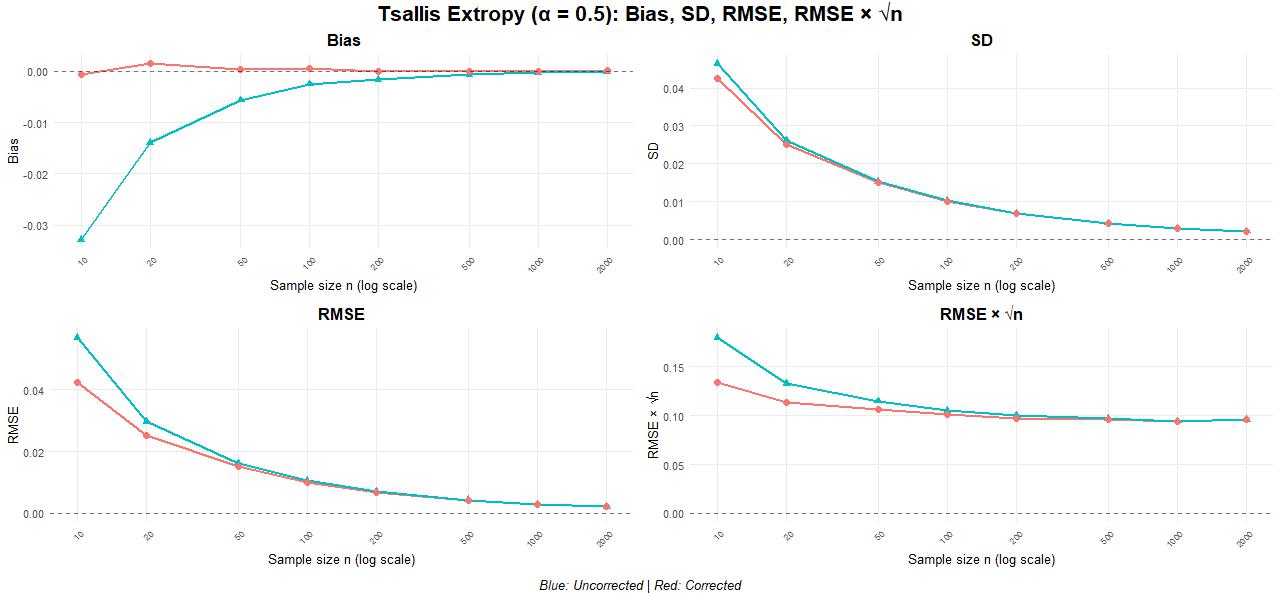}
\caption{Tsallis extropy estimator with \(\alpha = 0.5\): Bias, standard deviation, RMSE, and RMSE \(\times \sqrt{n}\). Blue: uncorrected; Red: bias-corrected. The bias-corrected estimator provides improved finite-sample accuracy, while both versions converge to the same limit as \(n \to \infty\).}
\label{fig:tsallis_alpha05}
\end{figure}

\begin{figure}[h!]
\centering
\includegraphics[width=1\textwidth]{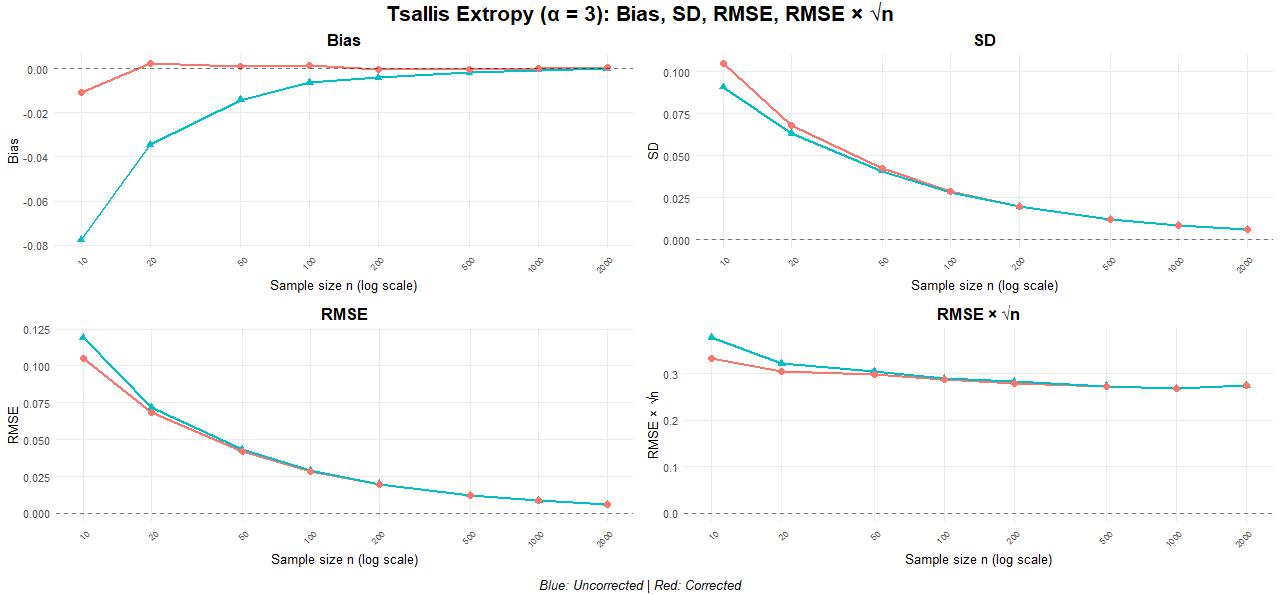}
\caption{Tsallis extropy estimator with \(\alpha = 3\): Bias, standard deviation, RMSE, and RMSE \(\times \sqrt{n}\). Blue: uncorrected; Red: bias-corrected. The asymptotic equivalence between the two versions is confirmed for both \(\alpha = 0.5\) and \(\alpha = 3\).}
\label{fig:tsallis_alpha3}
\end{figure}

\subsection{Main Conclusion}

From Figures 1-5, the following conclusions can be drawn:

\begin{enumerate}
    \item \textbf{Consistency:} For Shannon, Rényi, and Tsallis extropies, all estimators are consistent: the bias converges to 0, the standard deviation decays at rate \(1/\sqrt{n}\), and the RMSE decays at rate \(1/\sqrt{n}\).
    
    \item \textbf{Effect of bias correction:} The bias-corrected estimators significantly outperform the uncorrected ones for moderate sample sizes. The correction effectively removes the leading \(O(1/n)\) bias term, bringing the estimate closer to the true value.
    
    \item \textbf{Asymptotic equivalence:} For large \(n\), the bias becomes negligible, and the uncorrected and corrected estimators have identical standard deviations and RMSEs. This confirms the theoretical result that the two versions differ only by terms of order \(O(1/n)\).
    
    \item \textbf{Effect of \(\alpha\):} For \(\alpha = 0.5\), which gives more weight to rare events, the estimator exhibits larger finite-sample bias. For \(\alpha = 3\), which emphasizes dominant events, the bias is smaller. In all cases, the asymptotic equivalence is confirmed.
\end{enumerate}

The simulation results confirm the key theoretical result of this paper: the uncorrected and bias-corrected estimators are asymptotically equivalent. For large \(n\), the simpler uncorrected plug-in estimators can be used without concern, as they converge to the same limit as the corrected versions. This justifies their use in the companion paper \cite{companion}, where almost sure convergence and asymptotic normality are established.

For moderate sample sizes, however, the bias-corrected estimators provide improved finite-sample accuracy and are therefore recommended in practical applications where \(n\) is not sufficiently large.

\section{Conclusion}
\label{section5}
In this paper, we have provided a comprehensive study of the finite-sample bias of plug-in estimators for Shannon, Rényi, and Tsallis extropies. We established the bias of the intermediate functional \(S_\alpha\) and used it to derive explicit bias formulas for all three extropy estimators. Bias-corrected estimators were proposed, and we showed that the uncorrected and corrected estimators are asymptotically equivalent, differing only by terms of order \(O(1/n)\).

The simulation results confirm that the bias-corrected estimators significantly outperform the uncorrected ones for moderate sample sizes, while both versions converge to the same limit as \(n \to \infty\). This justifies the use of the simpler uncorrected plug-in estimators in asymptotic settings, such as those considered in the companion paper \cite{companion}, where almost sure convergence and asymptotic normality are established.

Future work includes extending the results to countably infinite alphabets (with suitable tail conditions) and developing bootstrap-based variance estimators for improved finite-sample performance.

\section*{Acknowledgements}

I wish to acknowledge \textsc{\textbf{Professor Gane Samb Lo}}, my former PhD supervisor, whose guidance marked the beginning of my journey into research. He introduced me to the spirit and practice of scientific research, taught me how to approach mathematical problems with rigor and curiosity, and showed me the path of research. I remain deeply grateful for his mentorship and for the lasting influence of his guidance on my work.

\end{document}